\definecolor{blue}{rgb}{0,0,1}
\newtheorem{theorem}{Theorem}
\numberwithin{theorem}{section}
\newtheorem{lemma}[theorem]{Lemma}
\newtheorem{corollary}[theorem]{Corollary}
\newtheorem{remark}[theorem]{Remark}
\def\I{{\mathcal{I}}}
\def\J{{\mathcal{J}}}
\def\O{{\mathcal{O}}}
\def\V{{\mathcal{V}}}
\def\N{{\mathbb{N}}}
\def\C{{\mathbb{C}}}
\renewcommand{\P}{\mathbb{P}}
\renewcommand{\tilde}{\widetilde}
\newcommand{\algoritmo}{\begin{minipage}{0.87\hsize}\linea}
\newcommand{\falgoritmo}{\linea\end{minipage}\bigskip}
\newcommand{\linea}{\vspace*{-5pt}\hrule\vspace*{5pt}}
\begin{document}

\bibliographystyle{plain}

\makeatletter

%%%%%%%
% Some abreviations for the bib database
%%%%%%%

\def\JACM{Journal of the ACM}
\def\CACM{Communications of the ACM}
\def\ICALP{International Colloquium on Automata, Languages
            and Programming}
\def\STOC{annual ACM Symp. on the Theory
          of Computing}
\def\FOCS{annual IEEE Symp. on Foundations of Computer Science}
\def\SIAM{SIAM Journal on Computing}
\def\SIOPT{SIAM Journal on Optimization}
\def\MOR{Math. Oper. Res.}
\def\BSMF{Bulletin de la Soci\'et\'e Ma\-th\'e\-ma\-tique de France}
\def\CRAS{C. R. Acad. Sci. Paris}
\def\IPL{Information Processing Letters}
\def\TCS{Theoretical Computer Science}
\def\BAMS{Bulletin of the Amer. Math. Soc.}
\def\TAMS{Transactions of the Amer. Math. Soc.}
\def\PAMS{Proceedings of the Amer. Math. Soc.}
\def\JAMS{Journal of the Amer. Math. Soc.}
\def\LNM{Lect. Notes in Math.}
\def\LNCS{Lect. Notes in Comp. Sci.}
\def\JSL{Journal for Symbolic Logic}
\def\JSC{Journal of Symbolic Computation}
\def\JCSS{J. Comput. System Sci.}
\def\JoC{J. of Complexity}
\def\MP{Math. Program.}
\sloppy

\begin{title}
{{\bf The polynomial eigenvalue problem is well conditioned for random inputs.}} 
\end{title}
\author{
  Diego Armentano\thanks{Partially supported by Agencia Nacional de 
  Investigaci\'on e Innovaci\'on (ANII), Uruguay, and by CSIC group 618}\\
  Universidad de La Rep\'ublica\\
  URUGUAY\\
  {\tt diego@cmat.edu.uy}
  \and
  Carlos Beltr\'an\thanks{ partially suported by MTM2014-57590-P from 
  Spanish Ministry of Science MICINN}\\
  Universidad de Cantabria\\ 
  SPAIN\\
  {\tt beltranc@unican.es}
  }

\date{\today}

\makeatletter
\maketitle
\makeatother

\thispagestyle{empty}

\begin{quote}
{\small 
{\bf Abstract.} 
We compute the exact value of the squared condition number for the polynomial eigenvalue problem, when the input matrices have entries coming from the standard complex Gaussian distribution, showing that in general this problem is quite well conditioned.
}
\end{quote}

\section{Introduction}
Recall the (homogeneous) generalized eigenvalue problem, GEPV: given two $n\times n$ matrices $A,B$, find $(\alpha,\beta)\in\P(\C^2)$ such that
\[
 \det(\beta A-\alpha B)=0.
\]
The point $(\alpha,\beta)$ is called a (generalized) eigenvalue of $(A,B)$ and the  corresponding vectors $x,y\in\C^n$ satisfying
\[
 (\beta A-\alpha B)x=0,\quad y^*(\beta A-\alpha B)=0
\]
are called the right and left eigenvectors of $(A,B)$. 

The polynomial eigenvalue problem (PEVP), a well known generalization of the GEVP, is among the most important problems in Numerical Linear Algebra, see the excellent survey \cite{MehrmannVoss} for context. The question now is: given $A_0,\ldots,A_d\in\C^{n\times n}$, find $(\alpha,\beta)\in\P(\C^2)$ such that
\[
\mathcal{F}(A,(\alpha,\beta))=\det(P(A,\alpha,\beta))=0,
\]
where $A=(A_0,\ldots,A_d)$ and
\begin{equation}\label{eq:PEVP}
  P(A,\alpha,\beta)=\beta^dA_0+\alpha\beta^{d-1}A_1+\cdots+\alpha^{d-1}\beta A_{d-1}+\alpha^dA_d=\sum_{k=0}^d\alpha^k\beta^{d-k}A_k.
\end{equation}

{A solution $(\alpha,\beta)\in\P(\C^2)$  of
$ P(A,\alpha,\beta)=0$ is called a generalized eigenvalue, or simply an eigenvalue, of $A$. 
Let us denote by $\mathrm{Eig}(A)$ the set of eigenvalues of $A$.}

For generic input $A$, there exist $nd$ such eigenvalues.  Right and left eigenvectors of $A$ are defined similarly to the GEVP case. If we set $d=1$ we get (up to a sign convention) the GEVP problem.

A good deal of effort has been dedicated in the last years to find efficient algorithms for particular instances of the PEVP such as the quadratic eigenvalue problem, see \cite{BaiSu,Meerbergen,Tisseur2013} (the inverse version \cite{Zaballa} is also of interest). The nowadays method of choice for solving the PEVP is to linearize it obtaining a GEVP (in dimension $nd$) that can be  solved using standard eigenvalue solvers. Different ways to linearize the problem can be analyzed in search for optimal ones in terms of stability or numerical convenience, see \cite{Higham1}.

A fundamental question regarding the numerical solution of the PEVP is the stability, governed by the so--called condition number \cite{DeTi,Higham2,Higham3}. The condition number for the GEVP has a nice expression computed in \cite[Section 6]{DeTi}: for a matrix pair $(A,B)\in\C^{n\times n}\times \C^{n\times n}$ and an eigenvalue $(\alpha,\beta)\in\P(\C^2)$, we have:
\[
 \mu((A,B),(\alpha,\beta))=\frac{\|(\alpha,\beta)\|\,\|x\|\,\|y\|}{|\bar\alpha y^*Ax+\bar\beta y^*Bx|}\,\|(A,B)\|_F,
\]
where $x$ and $y$ are the corresponding right and left eigenvectors, and $\|\cdot\|_F$ denotes Frobenius norm (the factor $\|(A,B)\|_F$ is missing in \cite{DeTi} since Dedieu and Tisseur compute the absolute condition number instead of the relative condition number). This formula is indeed a consequence of the definition of the condition number as the maximum of the change in the eigenvalue when the input is locally perturbed (see Section \ref{sec:general} for a more formal definition).

The same definition as the maximum possible change in the eigenvalue when the input is perturbed is valid for the more general PEVP. An explicit formula for the condition number for the PEVP was derived in \cite[Th. 4.2]{DeTi}:
\[
 \mu(A,(\alpha,\beta))=\left(\sum_{k=0}^d|\alpha|^{2k}|\beta|^{2d-2k}\right)^{1/2}\frac{\|x\|\,\|y\|}{|y^*v|}\,\|A\|_F,
\]
where again $A=(A_0,\ldots,A_d)$,
{$(\alpha,\beta)\in\mathrm{Eig}(A)$, and} $x$
and $y$ are the {corresponding} right and left eigenvectors and
\begin{equation}\label{eq:v}
 v=\bar\beta \frac{\partial}{\partial \alpha} P( A,\alpha,\beta)x-\bar\alpha \frac{\partial}{\partial \beta}P( A,\alpha,\beta)x.
\end{equation}
Note that the condition number depends both on the input $A$, and on the particular eigenvalue $(\alpha,\beta)$. It is customary to consider
\[
  \mu_{\mathrm max}(A)=\max_{{(\alpha,\beta)\in \mathrm{Eig}(A)}}\mu(A,(\alpha,\beta)).
\]

Computing the condition number for any numerical problem (including the PEVP) is a time-consuming task that suffers from intrinsic stability problems as pointed out in \cite{Demmel}. It is hence usual to estimate average values of the condition number for a given family of inputs, in such a way that we can rely on probabilistic arguments instead of computing conditions of particular inputs. The linear algebra case is probably the most studied one, see \cite{Edelman,ChenDongarra2005} for estimates on Turing's condition number of linear algebra with different normalizations. Theoretical results describing the average condition number of the standard eigenvalue--eigenvector problem have been recently obtained \cite{ArmCuc,ABBCS}, but we are not aware of any previous similar result for the PEVP case. In this paper we fill this gap:
\begin{theorem}\label{th:main}
 Let $A=(A_0,\ldots,A_d)\in\C^{(d+1)n^2}$ be chosen at random with
  {independent} entries  following {the standard complex Gaussian
  distribution $\mathcal{N}_\C(0,1)$}. Then, the expected value of the squared condition number for the PEVP satisfies:
\[
  {\rm
  E}_{A}\left(\frac{1}{dn}\sum_{{(\alpha,\beta)\in \mathrm{Eig}(A)}}\mu(A,(\alpha,\beta))^2\right)=\frac{(d+1)n^2-1}{d}.
  \]
\end{theorem}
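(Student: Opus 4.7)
I would follow the double--fibration (coarea) method pioneered by Shub and Smale and adapted to eigenvalue problems in the style of \cite{ArmCuc,ABBCS}, which converts averages over random inputs into geometric integrals on a solution variety. Let $N=(d+1)n^2$ and set
\[
V=\{(A,(\alpha,\beta))\in\C^N\times\P(\C^2):\det P(A,\alpha,\beta)=0\},
\]
with the two projections $\pi_1:V\to\C^N$ (an $nd$--sheeted branched cover) and $\pi_2:V\to\P(\C^2)$ (a submersion whose fibres have complex codimension one in $\C^N$). Write $\gamma(A)$ for the standard complex Gaussian density on $\C^N$ and $\NJ(\pi_i)$ for the corresponding normal Jacobians. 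The area formula applied to $\pi_1$ rewrites the quantity of interest as
\[
\E_A \sum_{(\alpha,\beta)\in \mathrm{Eig}(A)}\mu(A,(\alpha,\beta))^2 = \int_V \gamma(A)\,\mu^2\,\NJ(\pi_1)\,dV.
\]

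\textbf{Exchange of fibrations.} I would then apply the coarea formula to $\pi_2$ to rewrite this as
\[
\int_{\P(\C^2)}\!\!\int_{\pi_2^{-1}(\alpha,\beta)}\!\gamma(A)\,\mu^2\,\frac{\NJ(\pi_1)}{\NJ(\pi_2)}\,d\mathrm{fibre}\;d\P(\C^2).
\]
The $U(n)\times U(n)$--action $A_k\mapsto Q_1A_kQ_2^*$ leaves both the Gaussian density and the eigenvalue structure invariant, while the torus action $(\alpha,\beta)\mapsto(e^{i\theta}\alpha,e^{i\varphi}\beta)$ lifts to a rescaling of each $A_k$ by a unit complex number, which also preserves $\gamma$. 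Hence the inner integrand depends on $(\alpha,\beta)$ only through $|\alpha|$ and $|\beta|$, reducing the outer integral to a one--dimensional radial integral on $\P^1$ against the Fubini--Study measure.

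\textbf{Jacobian identity.} The analytic core is the identification of $\NJ(\pi_1)/\NJ(\pi_2)$ at a point of $V$. Using the implicit--function theorem applied to $\det P=0$, and noticing that the derivative of $\det P$ in the $(\alpha,\beta)$--direction is essentially $\bar x\,y^*\cdot v$ with $v$ as in \eqref{eq:v}, one shows
\[
\frac{\NJ(\pi_1)}{\NJ(\pi_2)}=c(n,d,|\alpha|,|\beta|)\,\frac{|y^*v|^2}{\|x\|^2\,\|y\|^2},
\]
for an elementary positive factor $c(n,d,|\alpha|,|\beta|)$ that encodes the polynomial's homogeneity. Multiplying by the formula for $\mu^2$ recalled from \cite{DeTi}, the denominator $|y^*v|^2$ cancels out, and the whole integrand collapses to
\[
c(n,d,|\alpha|,|\beta|)\Bigl(\sum_{k=0}^d|\alpha|^{2k}|\beta|^{2d-2k}\Bigr)\,\|A\|_F^2\,\gamma(A).
\]
This cancellation is indispensable: $\E[1/|y^*v|^2]$ is itself infinite, and only the Jacobian factor tames it.

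\textbf{Final Gaussian integrals and main obstacle.} What remains are standard integrals. After further $U(n)\times U(n)$--reduction to $x=y=e_n$, the fibre $\{P(A,\alpha,\beta)\text{ singular}\}$ becomes an affine--linear constraint on the $(n,n)$--entry of $P(A,\alpha,\beta)$, which is a specific linear combination of the $(n,n)$--entries of the $A_k$. One then integrates $\|A\|_F^2\gamma(A)$ against this Gaussian constraint using the classical formula for the expectation of the squared Frobenius norm of a Gaussian block conditioned on a hyperplane, picks up the sphere volumes $\mathrm{vol}(\SS(\C^n))^2$ from the eigenvector normalisations, and evaluates the radial $\P^1$--integral against $c(n,d,|\alpha|,|\beta|)(\sum_k|\alpha|^{2k}|\beta|^{2d-2k})$, which is a beta--type one--dimensional integral. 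Collecting the factors and dividing by $nd$ produces $((d+1)n^2-1)/d$. The main obstacle is the Jacobian identity: it requires careful bookkeeping of the degree--$d$ homogeneity and of the discrepancy between the unweighted $\ell^2$ product used on the $A_k$'s and the $U(2)$--invariant (Bombieri--Weyl--type) inner product under which the $\P(\C^2)$--symmetry would be fully transparent; once this identity is in place, the rest of the argument is mechanical.
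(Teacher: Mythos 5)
Your overall strategy matches the paper's: pass to the incidence variety $\V$, apply the coarea formula to exchange the two projections $\pi_1,\pi_2$, and exploit a cancellation between the normal Jacobian quotient and $\mu^2$. Your proposed Jacobian identity is correct — with $c(n,d,|\alpha|,|\beta|)=\bigl(\sum_{k}|\alpha|^{2k}|\beta|^{2d-2k}\bigr)^{-1}$ it is exactly the PEVP instance of the paper's Claim~6, which states abstractly that $\frac{\NJ\pi_1(p,z)}{\NJ\pi_2(p,z)}\,\mu(p,z)^2=\|p\|^2$. The paper derives this without ever invoking the explicit Dedieu--Tisseur formula, as one step in a general Theorem~\ref{th:general} valid for any bihomogeneous $\mathcal{F}$, from which the PEVP, the GEVP, dense/lacunary root finding, and sparse variants all drop out. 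Your route through the explicit $\mu$, $v$, $x$, $y$ is sound but special-cased and somewhat more laborious.

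The genuine gap is in the final fiber integral. Two issues. First, once the cancellation is carried out the integrand is just $\|A\|_F^2\,\gamma(A)$ — the $c\cdot\sum_k|\alpha|^{2k}|\beta|^{2d-2k}$ factor is identically $1$ — so the $\P(\C^2)$ integral contributes only the Fubini--Study volume $\pi$; there is no beta-type radial integral to evaluate. Second, and more seriously, what remains is $\int_{\V_{(\alpha,\beta)}}\|A\|_F^2 e^{-\|A\|_F^2}\,dA$ where $\V_{(\alpha,\beta)}=\{A:\det P(A,\alpha,\beta)=0\}$ is a \emph{degree-$n$} homogeneous hypersurface, not a hyperplane. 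Your assertion that after $U(n)\times U(n)$-reduction to $x=y=e_n$ "the fibre becomes an affine-linear constraint on the $(n,n)$-entry" conflates two different objects: fixing $x=y=e_n$ forces the whole last row and column of $P(A,\alpha,\beta)$ to vanish (a codimension $2n-1$ linear slice), and to recover the codimension-$1$ fiber you must disintegrate over eigenvector pairs with a nontrivial normal Jacobian of its own. If one naively conditioned on a single hyperplane the degree factor $n$ (which is exactly the $r$ of the paper) would be lost, and the result would be off by a factor of $n$. The paper sidesteps all of this with Lemma~\ref{lem:integral}: for any homogeneous complex variety $\J\subseteq\C^a$ of degree $d$ and dimension $n$, $\int_{\J}\|p\|^2 e^{-\|p\|^2}\,dp=\pi^n n d$. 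That lemma shows the fiber integral depends only on the fiber's degree and dimension — here degree $n$, dimension $(d+1)n^2-1$ — which immediately produces the factor $n\cdot((d+1)n^2-1)$ and, after dividing by $nd$, the claimed answer. To make your argument rigorous, either replace the hyperplane step with the full eigenvector coarea bookkeeping in the style of Armentano--Cucker/ABBCS, or observe the degree-and-dimension dependence directly and invoke the paper's lemma.
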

 A trivial consequence of Theorem \ref{th:main} is:
\begin{corollary}\label{cor:maxmu}
Let $A=(A_0,\ldots,A_d)\in\C^{(d+1)n^2}$ be chosen at random with
  {independent} entries {with distribution
  $\mathcal{N}_\C(0,1)$}. Then,
\[
 {\rm E}_{A}\left(\mu_{\mathrm max}(A)^2\right)\leq((d+1)n^2-1)n,
\]
and in particular
\[
 {\rm E}_{A}\left(\log\left(\mu_{\mathrm max}A)\right)\right)\leq \frac32\log n+\frac12\log(d+1).
\]
\end{corollary}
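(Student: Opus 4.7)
The plan is to deduce both inequalities in a short calculation from Theorem \ref{th:main}, exactly as the word ``trivial'' in the excerpt suggests. The starting observation is that for any finite set of nonnegative numbers the maximum is at most the sum, so
\[
  \mu_{\mathrm max}(A)^2
  \;=\;\max_{(\alpha,\beta)\in\mathrm{Eig}(A)}\mu(A,(\alpha,\beta))^2
  \;\leq\;\sum_{(\alpha,\beta)\in\mathrm{Eig}(A)}\mu(A,(\alpha,\beta))^2.
\]
The right-hand side is, up to the factor $dn$, exactly the quantity whose expectation is computed in Theorem \ref{th:main}.

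Next I would take expectations on both sides and apply Theorem \ref{th:main} with the monotonicity of $\mathrm{E}_A$ to obtain
\[
  \mathrm{E}_A\bigl(\mu_{\mathrm max}(A)^2\bigr)
  \;\leq\;\mathrm{E}_A\Bigl(\sum_{(\alpha,\beta)\in\mathrm{Eig}(A)}\mu(A,(\alpha,\beta))^2\Bigr)
  \;=\;dn\cdot\frac{(d+1)n^2-1}{d}
  \;=\;\bigl((d+1)n^2-1\bigr)n,
\]
which is the first inequality of the corollary.

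For the logarithmic bound I would use Jensen's inequality, exploiting the concavity of $\log$. Writing $\log\mu_{\mathrm max}(A)=\tfrac12\log\mu_{\mathrm max}(A)^2$ and bounding $(d+1)n^2-1\leq (d+1)n^2$, I get
\[
  \mathrm{E}_A\bigl(\log\mu_{\mathrm max}(A)\bigr)
  \;=\;\tfrac12\,\mathrm{E}_A\bigl(\log\mu_{\mathrm max}(A)^2\bigr)
  \;\leq\;\tfrac12\log\mathrm{E}_A\bigl(\mu_{\mathrm max}(A)^2\bigr)
  \;\leq\;\tfrac12\log\bigl((d+1)n^{3}\bigr),
\]
and expanding the last logarithm yields precisely $\tfrac32\log n+\tfrac12\log(d+1)$.

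There is no real obstacle here: both estimates are essentially immediate corollaries of Theorem~\ref{th:main}, with the only nontrivial ingredients being the ``max $\leq$ sum'' bound (which is tight only when a single eigenvalue dominates) and Jensen's inequality. Whatever slack is lost in these two steps is what makes the corollary a weaker, but more readily interpretable, statement than the theorem itself.
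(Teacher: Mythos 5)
Your argument is correct and is exactly the intended ``trivial consequence'' the paper alludes to without spelling out: bound the maximum by the sum, invoke Theorem~\ref{th:main} for the expectation of the sum, and then apply Jensen's inequality together with $(d+1)n^2-1\le(d+1)n^2$ for the logarithmic estimate. Nothing further is needed.
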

Since the number of digits needed to describe accurately the output of a numerical problem is controlled by the logarithm of the condition number, we can now see from Corollary \ref{cor:maxmu} that there is no intrinsic obstruction for computing solutions to the PEVP, even for very large values of $n$ and $d$.

The proof of Theorem \ref{th:main} (see Section \ref{PEVP}) will be a consequence of a much more general result, Theorem \ref{th:general}. We also prove similar results for other numerical problems, see sections \ref{Dense}, \ref{Lacunary}, \ref{GEVP}, \ref{sparsePEVP}, \ref{ellaboratedPEVP} for other examples of application.
\begin{remark}
 Since the condition number satisfies $\mu(tA,(\alpha,\beta))=\mu(A,(\alpha,\beta))$ for any nonzero $t\in\C$, the results above also apply in the case that the coefficients of the input matrices are $\mathcal{N}_\C(0,\sigma)$ for $\sigma\in(0,\infty)$, as far as all the coefficients follow the same distribution.
\end{remark}

\begin{remark}
  The standard approach to solve a polynomial equation $p(x)=0$ {in one
  complex variable,} is to construct the companion matrix of the given polynomial and compute its eigenvalues (this is the way used by {\sc Matlab}'s command {\tt root}). It is usually convenient to balance the companion matrix before applying QR or some other standard eigensolver. Specific eigensolvers that take advance of the special structure of companion matrices have also been developped, see \cite{Bini1,Bini2,Barel,Aurentz} and references therein. The stability of the process can be improved by using a companion matrix different from the standard one, see  \cite{Dopico}.
 
 Currently existing eigenvalue solvers exhibit remarkable stability and accuracy properties, and indeed the above process shows excellent practical performance. In contrast, computing eigenvalues of a matrix writing down its characteristic polynomial and solving by Newton's method or other basic procedure shows quite a poor performance in practice. This may have contributed to support the idea, extended in some part of the community, that polynomial root finding is poor--conditioned, while eigenvalue computation is well--conditioned. Indeed, in \cite[p. 92]{Trefethen} root finding is presented as a classic example of ill-conditioned problem, and so is the computation of eigenvalues in the non--symmetric case. 
 
 Theorem \ref{th:main} shows that in a homogeneous context, at least if the coefficients of the input instances are derived from $\mathcal{N}_\C(0,1)$, we actually have the following.
 \begin{quote}
{\em Both  polynomial root finding and polynomial eigenvalue problems are pretty well conditioned on the average, and indeed polynomial root finding has average squared condition number exactly equal to $1$.}
   \end{quote}
\end{remark}
\begin{remark}
 The condition number in a numerical problem measures the change in the solution when the input is locally perturbed, giving a worst case bound: the worst (local) perturbation in the input defines the condition of a problem.
 
 In some situations, perturbations of the input can be expected to be ``random'' in the sense that there is no prefered direction, see \cite{Armentano:10}. Of course, in this case the observed condition number of the problem may seem better than the theoretical bound, since most perturbations will be quasiorthogonal to the worst direction from the concentration of measure phenomenon. The condition number for random direction in the perturbation was called in \cite{Armentano:10} the {\em stochastic condition number} $\mu_{\mathrm st}$. Its formula for the PEVP is given (using \cite[Th. 4.2]{DeTi}) by:
 \[
  \mu_{\mathrm st}(A,(\alpha,\beta))^2=\mathrm{E}_{\dot A}\left(\frac{|y^*P(\dot A,\alpha,\beta)x|^2}{|y^*v|^2}\,\|A\|_F^2\right),
 \]
 where $\dot A=(\dot A_0,\ldots, \dot A_d)$ lies in the unit sphere in $\C^{n^2(d+1)}$ and $v$ is given by \eqref{eq:v}. From \cite[Theorem 1]{Armentano:10} we have $\mu^2=(d+1)n^2\mu_{\mathrm st}^2$, which gives the equality 
 \[
   {\rm
   E}_{A}\left(\frac{1}{dn}\sum_{{(\alpha,\beta)\in\mathrm{Eig}(A)}}\mu_{\mathrm st}(A,(\alpha,\beta))^2\right)=\frac{(d+1)n^2-1}{d(d+1)n^2}\approx\frac1d.
 \]
 Thus, for a random eigenvalue of a random input and a random direction in the perturbation, one should expect the relative change in the solution to be around $\sqrt{1/d}$ times the size of the perturbation (relative to $\|A\|_F=\|(A_0,\ldots,A_d)\|_F$).

\end{remark}

\section{Geometric framework: a general result}\label{sec:general}
We will follow the path of \cite{ShSm93b,DeTi,Tisseur2000} but considering a general setting of input and output spaces which can be applied to many different situations 
{(see for example \cite{BlCuShSm98}, or \cite{Armentano:10})}.

In many numerical problems there is a space of inputs $\I$ that we can
identify with $\C^m$, and a space of outputs $\O$ that we can identify
with a one--dimensional projective irreducible algebraic subvariety of $\P(\C^b)$,
for some $b\in\N$. We denote by $d_\O$ the degree of $\O$. We denote elements in $\I$ by $p$ and elements in $\O$ by $z$, and we consider a polynomial $\mathcal{F}:\C^m\times\C^b\to\C$

\[
 \mathcal{F}(p,z)
\]
bihomogeneous in its two variables with degrees:
\[
 \deg_p \mathcal{F}=r,\quad \deg_z\mathcal{F}=s.
\]
We look at the problem:
\begin{center}
 Given $p\in\I$, find $z\in\O$ such that $\mathcal{F}(p,z)=0$.
\end{center}
Then, we consider the algebraic variety
\[
 \V=\{(p,z)\in\I\times\O:\mathcal{F}(p,z)=0\}\subseteq\C^m\times\P(\C^b),
\]
as well as the two natural projections $\pi_1:\V\to\I$, $\pi_2:\V\to\O$. 

{The space $\C^m$ is equipped with the canonical Hermitian inner
product, and $\mathbb{P}(\C^b)$ is equipped with the induced Fubini--Study metric and Hermitian
structure (see \cite{BlCuShSm98} for example).
}

A random choice of $p\in\I$ is to be understood as a random variable for the Gaussian density
\[
 \frac{1}{\pi^m} e^{-\|p\|^2}.
\]
{
That is, under the equivalence $\I\equiv\C^m$, the elements of $\I$
follow the standard complex Gaussian distribution.
%where the coordinates of $p$ are independent $\mathcal{N}_\C(0,1)$.
}

%That is, under the equivalence $\I\equiv\C^m$, the elements of $\I$
%follow a normal \rojo{$\mathcal{N}_\C(0,I_m)$} distribution, that means
%that \rojo{the coordinates of $p$ are independent $\mathcal{N}_\C(0,1)$}.
%the real and the complex parts follow a $\mathcal{N}_\R(0,1/2)$ distribution. 
We will use the following notation for norms:
\begin{itemize}
  \item By $\|\cdot\|$ we mean the {Euclidean} vector norm.
    {In particular, if $A$ is a matrix, we have $\|A\|=\|A\|_F$  (Frobenius norm).}
% \item By $\|\cdot\|_*$ we mean some norm, coming from a positive definite Hermitian product, i.e. $\|x\|_*=\|\Sigma x\|$ for some positive definite Hermitian matrix $\Sigma$.
 \item By $\|A\|_2$ (with $A$ a matrix or a linear operator) we denote the operator $2$--norm of $A$.
 \item {By $\|\cdot\|_z$ we mean the norm in the tangent space
   $T_z\P(\C^b)$.}
\end{itemize}
Here are some basic examples of the general setting above, including the PEVP case:
\begin{enumerate}
 \item[\bf A] $\I\equiv\C^{N+1}$ is the set of polynomials of degree at most $N$, homogeneous in two variables, and we search for a zero $z\in\P(\C^2)$. Here,
 \[
  \mathcal{F}(p,z)=p(z).
  \]
 \item[\bf B] $\I\equiv\C^{n\times n}\times \C^{n\times n}\equiv
   \C^{2n^2}$ is the set of pairs of matrices $(A,B)$, and we search for
    a generalized eigenvalue $(\alpha,\beta)\in\P(\C^2)$, i.e. {a
    solution of}
    $$\mathcal{F}((A,B),(\alpha,\beta))=\det(\beta A-\alpha B)=0.$$
    In general, there exist $n$ generalized eigenvalues.
 \item[\bf C] $\I\equiv\C^{n\times n}\times\cdots\times \C^{n\times n}\equiv\C^{(d+1)n^2}$ (there are $d+1$ copies of $\C^{n\times n}$), and we want to solve the PEVP, that is given $A=(A_0,\ldots,A_d)$ we want to find $(\alpha,\beta)\in\P(\C^2)$ such that \[
 \mathcal{F}(A,(\alpha,\beta))=\det(P(A,\alpha,\beta))=0,
 \]
{where $P$ is given in (\ref{eq:PEVP}).}
 In general, there exist $dn$ eigenvalues.
 Note that {\bf{A}} and {\bf{B}} can be seen as a particular cases of {\bf{C}}.
 \item[\bf D] One can also consider sparse or lacunary versions of the problems above, namely the same problems where some of the entries of the matrices, or some of the coefficients of the polynomials, are set to $0$.
\end{enumerate}
The relative condition number in these and other numerical problems is
defined as follows {(see \cite{BlCuShSm98}, \cite{Armentano:10}, or
\cite[Sec. 14.1]{Condition} for a general setting}, or \cite{DeTi} for an specification to the PEVP). 
Let $z_0$ be a solution for an input $p_0$. Assume that $z_0$ is a smooth point of $\O$ and let $\dot z_0$ be any nonzero vector in $T_{z_0}\O$. When the directional derivative
\[
D\mathcal{F}(p_0,z_0)(0,\dot z_0)
\]
is not equal to $0$, from the implicit function
theorem, there is a locally defined {\em solution map}, {denoted by} ${\mathrm Sol}$, which sends an input $p$ (close to $p_0$) to its output $z$ (close to $z_0$). 
{This map is given by the composition $\pi_2\circ\pi_1^{-1}$, locally defined in
a neighborhood of $p_0$.}
%\\
%\nota{Hay que aclarar que la derivada ``parcial'' no es la derivada
%libre de de $\mathcal{F}$ definida en $\C^m\times\C^b$,  sino la
%derivada de la funci\'on $\mathcal{F}(p_0,\cdot):\mathcal{O}\to\C$ definida sobre la
%v.a. unidimensional. Tambi\'en hay que aclarar que esta derivada tiene
%sentido siempre que $z$ sea un punto donde $\O$ es smooth.}
%\\
The condition number at $(p_0,z_0)$ is then defined by the
{operator} norm of the derivative of the solution map, normalized
by the {norm of the input}
{
\[
 \mu(\mathcal{F},p_0,z_0)=\|p_0\|\|D{\mathrm Sol}(p_0,z_0)\|_2=
\sup_{\dot p\in T_{p_0}\C^m}\frac{\|\dot z\|/\|{z_0}\|}{\|\dot p\|/\|p_0\|},
 %\sup_{(\dot p,\dot z)\in T_{(p_0,z_0)}\V}\frac{\|\dot z\|/\|{z_0}\|}{\|\dot p\|/\|p_0\|}.
\]
where $\dot z=D\mathrm Sol (p_0,z_0)\dot p$.
(We will write $\mu(p_0,z_0)$  when the mapping $\mathcal{F}$ is
clear from the context.) }
%\nota{En el t\'ermino del medio saqu\'e la normalizaci\'on por $\|z_0\|$
%dado que forma parte de la norma del output.}
%
%\nota{Saqu\'e el tangente a la variedad soluci\'on en esta parte, (al igual
%que en el corolario), para no entrar en detalle de que $\V$ es
%diferenciable en esos puntos y la m\'etrica es la producto. Si prefer\'is
%como antes, est\'a comentado en el .tex.}
%Note that we only use the whole notation $\mu(\mathcal{F},p_0,z_0)$ when the mapping $\mathcal{F}$ is not clear in the context, for example in the proof of Corollary \ref{cor:otrosproductos}. 

In the case that ${\mathrm Sol}$ is just Lipschitz, the condition number
uses the local Lipschitz constant instead of the $2$--norm of the
derivative, but we will not deal with this case here. The condition
number is set to $\infty$ if $D\mathcal{F}(p_0,z_0)(0,\dot z_0)=0$ for some (i. e. for all) nonzero vector $\dot z_0\in T_{z_0}\O$.

This geometric definition of the condition number is inspired in the intuitive definition: it is a local bound for the change in the output under perturbations on the input, both measured in relative error terms. The classical condition number of linear algebra $\kappa(A)=\|A\|_2\,\|A^{-1}\|_2$ does not exactly follow this definition since in our general frame we measure the relative error with respect to the norm of the input as a vector. Indeed, our definition gives the so called Demmel's condition number:
\[
 \mu(A,x)=\|A\|_F\|A^{-1}\|_2,
\]
for the problem of solving $Ax=b$ with $b$ fixed. 

Our main result is a general formula for the expected value of the condition number. Its main feature is that it is valid for the general input-output setting described above. 
{For $p\in\I$, and $z\in\mathcal{O}$, let $n(p)=\sharp
\pi_1^{-1}(p)$, and $\V_z=\pi_2^{-1}(z)$, i.e.
\[
  n(p)=\sharp\{z\in\O:\mathcal{F}(p,z)=0\},\quad
  \V_z=\{p\in\I:\mathcal F(p,z)=0\},
\]
where we set $n(p)=+\infty$ in case $\pi_1^{-1}(p)$ not finite.
}

\begin{theorem}\label{th:general}
 With the notations above, assume that :
 \begin{itemize}
  \item $n(p)$ is finite for all $p\in\I$ out of some $m-2$ dimensional subvariety, and there exists $p_0\in\I$ such that
\begin{equation}\label{eq:condnew}
  n(p_0)=sd_\O.
\end{equation}
\item $\V_z$ is an $m-1$ dimensional variety for all $z\in\O$, and there exists $z_0\in\O$ such that the degree of $\V_{z_0}$ is equal to $r$.
 \end{itemize}

%  for all $(p,z)\in\V$ out of some algebraic subvariety of $\V$ of dimension strictly smaller than that of $\V$, the kernel $K(p,z)=\ker(D\mathcal{F}(p,z))$ of the derivative $D\mathcal{F}(p,z)$ satisfies:
%   \begin{align}
%    K(p,z)\not\supseteq T_p\I\times\{0\},\label{eq:cond1}\\K(p,z)\not\supseteq\{0\}\times T_z\O\label{eq:cond2}.
%   \end{align}
  Then, for all $p\in\I$ out of some zero--measure set, the equation $\mathcal{F}(p,z)=0$ has exactly $sd_\O$ solutions in $\O$. Moreover, the expected squared condition number satisfies:
  \begin{equation}\label{eq:main}
   {\rm E}_{p\in\I}\left(\frac{1}{sd_\O}\sum_{z:\mathcal{F}(p,z)=0}\mu(p,z)^2\right)=\frac{(m-1)r}{s}.
  \end{equation}
% Moreover, the equality in \eqref{eq:main} holds  if instead of \eqref{eq:cond1} and \eqref{eq:cond2} we have just \eqref{eq:cond1} plus:
% \begin{equation}\label{eq:cond2bis}
% \text{$d_\I=1$ and there is some $p\in\I$ such that $\sharp\{z\in\O:\mathcal{F}(p,z)=0\}=sd_\O$.}
% \end{equation}

% \begin{equation}\label{eq:cond2bis}
% \text{$d_\I=1$ and $\exists\mathcal{R}\subseteq\I$, such that for $p\not \in\mathcal{R}$. $\sharp\{z\in\O:F(p,z)=0\}=s$.}
% \end{equation}
% Here, $\mathcal{R}$ is an algebraic subvariety of $\I$ of codimension $1$ or more.

\end{theorem}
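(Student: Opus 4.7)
The approach is a double coarea fibration over the solution variety $\V$, using the projections $\pi_1:\V\to\I$ (generically an $sd_\O$-to-one cover) and $\pi_2:\V\to\O$ (with $(m-1)$-complex-dimensional fibers $\V_z$). First I would apply coarea to $\pi_1$ to turn the sum over zeros of $\mathcal{F}$ into an integral over $\V$,
\[
 {\rm E}_{p}\!\left(\sum_{z:\mathcal{F}(p,z)=0}\mu(p,z)^2\right)
 =\int_\V \mu(p,z)^2\,\frac{e^{-\|p\|^2}}{\pi^m}\,J\pi_1(p,z)\,d\V(p,z),
\]
and then coarea-disintegrate with respect to $\pi_2$ to obtain
\[
 \int_\O\int_{\V_z}\mu(p,z)^2\,\frac{e^{-\|p\|^2}}{\pi^m}\,\frac{J\pi_1(p,z)}{J\pi_2(p,z)}\,d\V_z(p)\,dz,
\]
where $J$ denotes the real normal Jacobian.

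The pivotal algebraic step is the identity $\mu(p,z)^2\,J\pi_1(p,z)/J\pi_2(p,z)=\|p\|^2$. To prove it I would parametrize $T_{(p,z)}\V$ by $\dot p\mapsto(\dot p,-\langle\dot p,g\rangle/\beta\,u_z)$, where $g=\nabla_p\mathcal{F}(p,z)$, $u_z$ is a unit vector in $T_z\O$, and $\beta=D_z\mathcal{F}(u_z)$. The induced Hermitian metric on $T_{(p,z)}\V$ then has one ``slanted'' direction of squared norm $1+\|g\|^2/|\beta|^2$ and $m-1$ isometric directions; an explicit singular-value computation yields $J\pi_1=|\beta|^2/(|\beta|^2+\|g\|^2)$, $J\pi_2=\|g\|^2/(|\beta|^2+\|g\|^2)$, and $\|D\mathrm{Sol}\|_2=\|g\|/|\beta|$. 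Combined with the definition $\mu=\|p\|\cdot\|D\mathrm{Sol}\|_2$ these give the identity. I expect this Jacobian calculation to be the main technical obstacle, since it mixes a complex-to-real Jacobian conversion (for $\C$-linear $L:\C^m\to\C$ one has $J_{\R}L=\|L\|_2^2$) with the non-trivial inherited metric on $T_{(p,z)}\V$.

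With the integrand reduced to $\|p\|^2 e^{-\|p\|^2}/\pi^m$, it remains to evaluate $I(z):=\int_{\V_z}\|p\|^2 e^{-\|p\|^2}/\pi^m\,d\V_z(p)$. Since $\mathcal{F}(\cdot,z)$ is homogeneous of degree $r$ in $p$, the fiber $\V_z$ is a complex cone over $\P(\V_z)\subset\P(\C^m)$ whose projective degree is at most $r$, with equality for $z$ outside a proper subvariety (by hypothesis). Writing $p=\rho\,\omega$ with $\omega\in\SS^{2m-1}$, the cone volume element factors as $\rho^{2m-3}\,d\rho\,d\sigma_S$ over the spherical slice $S=\V_z\cap\SS^{2m-1}$. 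The radial piece is the Gaussian moment $\int_0^\infty\rho^{2m-1}e^{-\rho^2}\,d\rho=(m-1)!/2$, while the Hopf fibration gives $\vol(S)=2\pi\,\vol(\P(\V_z))=2\pi r\,\pi^{m-2}/(m-2)!$ via the classical degree--volume identity $\vol(X)=\deg(X)\cdot\pi^k/k!$ for projective subvarieties of complex dimension $k$. Multiplying these produces $I(z)=(m-1)r/\pi$.

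Finally, the same degree--volume identity yields $\vol(\O)=d_\O\,\pi$, so the iterated integral equals $(m-1)r\,d_\O$, and dividing by $sd_\O$ gives $(m-1)r/s$. The assertion $n(p)=sd_\O$ for $p$ outside a measure-zero subset follows from the hypothesis that $n(p_0)=sd_\O$ for some $p_0$, combined with the B\'ezout upper bound (the fiber $\pi_1^{-1}(p)\subset\O$ is cut out by a degree-$s$ equation on a degree-$d_\O$ variety) and upper semicontinuity of fiber cardinality, with the codimension-$2$ exceptional locus permitted by the hypothesis being measure-zero.
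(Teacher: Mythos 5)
Your proposal is correct and follows essentially the same route as the paper: a double coarea disintegration over $\V$, the pointwise identity $\mu^2\,J\pi_1/J\pi_2=\|p\|^2$, and a degree--volume evaluation of the Gaussian integral over the homogeneous fiber $\V_z$. The only differences are presentational (you parametrize the tangent space by the gradient $g$ and the scalar $\beta$ rather than via the abstract adjoint $D\pi_2^*$, and you evaluate the fiber integral by polar coordinates plus the Hopf fibration rather than by coarea onto $\P(\V_z)$), but these are the same computations in different clothing.
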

%The following corollary states that, if a Hermitian product different from the standard one is used in the input space (both for the definition of randomness and that of the condition number), one can still claim the equality in \eqref{eq:main}.
%\begin{corollary}\label{cor:otrosproductos}
% Under the hypotheses of Theorem \ref{th:general}, let $\|\cdot\|_*$ be any positive definite Hermitian product in $\I=\C^m$ and define:
% \[
%  %\mu_*(p,z)=\sup_{(\dot p,\dot z)\in T_{(p,z)}\V}\frac{\|\dot z\|/\|z\|}{\|\dot p\|_*/\|p\|_*}.
%   \rojo{\mu_*(p,z)=\sup_{\dot p\in T_{p}\C^m}\frac{\|\dot
%   z\|/\|z\|}{\|\dot p\|_*/\|p\|_*}.}
% \]
%where \rojo{$\dot z=D\mathrm Sol (p_o,z_0)\dot p$}.
%
% Moreover, let $p\in\I$ be a random variable with density
% \[
%  C\,e^{-\|p\|_*^2},
% \]
% where $C$ is a constant making the total integral equal to $1$. Then, 
%   \begin{equation}\label{eq:main2}
%   {\rm E}_{p\in\I}\left(\frac{1}{sd_\O}\sum_{z:\mathcal{F}(p,z)=0}\mu_*(p,z)^2\right)=\frac{(m-1)r}{s}.
%  \end{equation}
%\end{corollary}

\subsection{Proof of Theorem \ref{th:general}}
It will be helpful to recall the notion of a {\em constructible subset} of an algebraic variety, see \cite[p. 393]{Lojasewick}: it is a set of the form
\[
\bigcup_{i=1}^r(V_i\setminus W_i),
\]
where $V_i,W_i$ are algebraic subvarieties of $X$. In other words, a constructible set is a finite union of quasialgebraic varieties. Chavalley's theorem \cite[p. 395]{Lojasewick} asserts that the projection of a constructible set is a constructible set.

We organize the proof of Theorem \ref{th:general} in several claims.
\begin{itemize}
 \item[Claim 1]: {\em There exists a proper subvariety
   $\mathcal{S}\subseteq\I$ such that for $p\in\I\setminus\mathcal S$
    {we have $n(p)=sd_\O$} and all the solutions $z\in\O$ of $\mathcal F(p,z)=0$ have multiplicity equal to
    $1$. {In particular, $\I\setminus\mathcal{S}$ has measure zero.} 
    Similarly, there exists at most a finite number of $z\in\O$ such that the degree of $\V_z$ is different from $r$}: this is a classical fact, but we include a short proof using tools from classical complex analytic geometry since the same tools will be used later.  
   The set
\[
 \hat \I=\{p\in\I:n(p)=sd_\O\}
\]
is constructible (see \cite[p. 398]{Lojasewick}). 
    {From \eqref{eq:condnew}, $\hat\I\neq\emptyset$, and furthermore,
    if $p\in\hat\I$, from B\'ezout's theorem \cite[p.
    432]{Lojasewick}, all the solutions $z\in\O$ of
    $\mathcal{F}(p,z)=0$ have multiplicity equal to $1$ (in
    particular, all $z$ are smooth points of $\O$). 
    Then, from the implicit function theorem for all $\tilde p$ sufficiently close to $p$ we
    have also $\tilde p\in\hat \I$. In other words, $\hat \I$ is a
    constructible open set, thus a quasialgebraic set. In particular
    its complement
    \[
 \mathcal{S}=\I\setminus\hat \I,
\]
is an algebraic subvariety of $\I$, and thus the first part of the claim
    follows.
    }
 The second claim is proved in a similar way: the property of $\V_z$
    having degree equal to $r$ is open and quasialgebraic, and thus it
    is satisfied in a Zariski open subset of $\O$. 
    {Since $\O$ is a one-dimensional irreducible algebraic variety,
    the last condition it is satisfied for all $\O$ except for, at most, a finite collection of points.}
\item [Claim 2]: {\em For all but a finite set of $z\in\O$, the set
\[
 \mathcal S_z=\{p\in\mathcal{S}:\mathcal{F}(p,z)=0\}
\]
is an algebraic variety of dimension at most $m-2$.} Assume that the set
    of $z$ such that $\mathcal S_z$ has dimension $m-1$ is infinite.
    Since the number of irreducible components of the $m-1$ dimensional
    variety $\mathcal{S}$ is finite (recall that any algebraic variety can be writen as a finite union of irreducible varieties, see for example \cite[p. 355]{Lojasewick}), this implies that there exists one
    such irreducible component $Y$ of dimension $m-1$ such that
    $\mathcal S_z\supseteq Y$ for an infinite collection of $z\in \O$.
    But then for all $p\in Y$ the number of solutions of
    $\mathcal{F}(p,z)=0$ is infinite, that is a contradiction to the hypotheses of the theorem.
   %%%nota%%%%
%    \nota{Una cosa posible a aclarar, es que si el n\'umero de componentes
%    irreducibles de una v.a. es finito, es una propiedad general, mejor decirlo y luego decir
%    que es aplica para $\mathcal{S}$. Lo menciono porque asi como est\'a
%    escrito parece ser  una propiedad particular de $\mathcal{S}$.
%    Tambi\'en se puede agregar referencia.
%}
  %%%%%%%%%%%
\item [Claim 3]: {\em Let $\hat \O$  be the smooth part of $\O$ and let
  $\hat \V=\{(p,z)\in \hat \I\times \hat O:\mathcal F(p,z)=0\}$. Then,
    $\pi_1$ restricted to $\hat\V$ is a submersion onto $\hat\I$}. From
    Claim $1$, given $(p,z)\in\hat\V$, $z$ is a solution to $\mathcal
    F(p,z)=0$ of multiplicity $1$, that is {$\frac{\partial}{\partial
    z}\mathcal F(p,z)\neq 0$}  and from the implicit function theorem
    $\hat \V$ is a $m$--dimensional complex manifold, its tangent space is
\[
 T_{(p,z)}\hat\V={\mathrm Ker}D\mathcal{F}(p,z)\subseteq \I\times T_z\P(\C^b)\equiv\I\times z^\perp.
\]
and the projection $\pi_1\mid_{\hat \V}:\hat\V\to\hat\I$ is a submersion at $(p,z)$.
%%%%%
%    \nota{Cambi\'e ``surjective'' por $\neq 0$.}
%%%%%
\item [Claim 4]: {\em The restriction of $\pi_2$ to $\hat \V$ is a submersion onto some Zariski open subset of $\hat\O$.} Note that $\pi_2:\V\to\O$ is surjective by the hypotheses of the theorem. 
  Now, from Claim 2 we have that for all but a finite number of points $z\in\hat\O$ 
  there is some
  $p\in\I$ such that {$p\in\V_z\setminus\mathcal S_z$}, and hence
  $\phi_2=\pi_2\mid_{\hat \V}$ contains $\hat \O$ except at most a finite number of points.
 Now, from Sard's theorem the set of singular values of $\phi_2$ is
 zero--measure, and it is also quasialgebraic since it is the intersection of $\hat\O$ with the projection of the algebraic variety $\{p,z:\mathcal F(p,z)=0, \frac{\partial}{\partial z}\mathcal F(p,z)=0\}$. It must then be at most a finite number of points, and the claim is proved.
\item [Claim 5]: {\em For any measurable mapping $g:\V\to[0,\infty]$:
\begin{equation}\label{eq:coarea}
 \int_{p\in \I}\sum_{z\in \O:\mathcal{F}(p,z)=0}g(p,z)\,dp=\int_{z\in
  \O}\int_{{p\in \V_z}}\frac{NJ\pi_1(p,z)}{NJ\pi_2(p,z)}g(p,z)\,dp\,dz,
\end{equation}
where $NJ$ means Normal Jacobian (i.e. the Jacobian of the derivative restricted to the orthogonal to its kernel). In particular, these integrals are well defined}. This is a consecuence of the smooth coarea formula applied to the two projections $\hat \V\to\hat \I$ and $\hat\V\to\hat\O$ (see for example \cite{BlCuShSm98} or \cite[p. 244]{Condition}). From Claim 3 we have:
\[
 \int_{(p,z)\in\hat \V}NJ\pi_1(p,z)\,g(p,z)\,d(p,z)=\int_{p\in \hat\I}\sum_{z\in \O:\mathcal{F}(p,z)=0}g(p,z)\,dp.
\]
On the other hand, from Claim 4 we have:
\[
 \int_{(p,z)\in\hat \V}NJ\pi_1(p,z)\,g(p,z)\,d(p,z)=\int_{z\in \hat\O}\int_{p\in \hat\I:\mathcal{F}(p,z)=0}\frac{NJ\pi_1(p,z)}{NJ\pi_2(p,z)}g(p,z)\,dp.
\]
Putting these two equalities together we have:
\[
 \int_{p\in \hat\I}\sum_{z\in \O:\mathcal{F}(p,z)=0}g(p,z)\,dp=\int_{z\in \hat\O}\int_{p\in \hat\I:\mathcal{F}(p,z)=0}\frac{NJ\pi_1(p,z)}{NJ\pi_2(p,z)}g(p,z)\,dp.
\]
Finally, from Claims 1 and 2 we can substitute $\hat\I$ and $\hat\O$ by $\I$ and $\O$ respectively in this last equality, and the claim follows.
\item [Claim 6]: {\em The quotient of the Normal Jacobians satisfies
\[
 \frac{NJ\pi_1(p,z)}{NJ\pi_2(p,z)}\mu(p,z)^2=\|p\|^2,\quad (p,z)\in\hat\V.
\]
}
{Given $(p,z)\in\hat\V$, let $D\pi_2(p,z)^*:T_z\hat\O\to T_{(p,z)}\hat\V$ }be the adjoint operator to $D\pi_2(p,z)$, and let
\[
 (\dot p_0,\dot z_0)=D\pi_2(p,z)^*(\dot z_0)\in T_{(p,z)}\hat\V,
\]
where $\dot z_0$ is any {non zero vector in $T_z\hat\O$}, and let
$(\dot p_j,\dot z_j),1\leq j\leq m-1$ be an orthonormal basis of the
orthogonal complement of $(\dot p_0,\dot z_0)$ in $T_{(p,z)}\hat\V$
(this tangent space was computed in Claim 3). Since the image of
$D\pi_2(p,z)^*$ is the orthogonal complement to the kernel of
$D\pi_2(p,z)$, then we have $\dot z_j=0$ and $\dot p_j$ orthogonal to $\dot p_0$ for $1\leq j\leq m-1$. 
Then, {writing the linear operators $D\pi_1(p,z)$ and $D\pi_2(p,z)$
on this basis we get}:
\[
NJ\pi_1(p,z)=|\det(D\pi_1(p,z))|^2=\frac{\|\dot p_0\|^2}{\|\dot
p_0\|^2+\|\dot z_0\|_z^2}.
\]
Similarly, we have 
\[
 NJ\pi_2(p,z)=\frac{\|\dot z_0\|_z^2}{\|\dot p_0\|^2+\|\dot z_0\|_z^2}.
\]
In particular,
\[
 \frac{NJ\pi_1(p,z)}{NJ\pi_2(p,z)}=\frac{\|\dot p_0\|^2}{\|\dot z_0\|_z^2}.
\]
Now, note from the definition of the condition number that $\mu(p,z)$ is
precisely $\|p\|$ times the operator norm of the linear operator
{
$D\mathrm{Sol}(p,z)$ given by
\[
   D\pi_2(p,z)\circ D\pi_1^{-1}(p,z):T_p\hat\I\to T_z\hat\O.
\]
}
Since we have {an orthogonal} basis $\dot p_0,\dot p_1,\ldots,\dot p_{m-1}$ such that
{$D\pi_2(p,z)\circ D\pi_1^{-1}(p,z)(\dot p_j)=0$} for $1\leq j\leq m-1$, we conclude that
{
\begin{align*}
  \frac{\mu(p,z)}{\|p\|}&=\|D\pi_2(p,z)\circ
 D\pi_1^{-1}(p,z)\|_2 \\
  & =\frac{\|D\pi_2(p,z)\circ D\pi_1^{-1}(p,z)(\dot p_0)\|_z}{\|\dot
  p_0\|}=\frac{\|D\pi_2(p,z)(\dot p_0,\dot z_0)\|_z}{\|\dot p_0\|}=\frac{\|\dot z_0\|_z}{\|\dot p_0\|}.
\end{align*}
}
We have thus proved the equality in the claim.
\item [Claim 7]: {\em The following equality holds:
\[
{\mathrm E} _{p\in\I}\left(\sum_{z\in\O:\mathcal{F}(p,z)=0}\mu(p,z)^2\right)=(m-1)\,r\,d_\O.
\]
(And this readily implies the theorem).
}
In order to prove Claim 7 we use claims 5 and 6 with $g(p,z)=e^{-\|p\|^2}\mu(p,z)^2/\pi^m$, getting:
\[
 {\mathrm E}
 _{p\in\I}\left(\sum_{z\in\O:\mathcal{F}(p,z)=0}\mu(p,z)^2\right)=\frac{1}{\pi^m}\int_{z\in
 \O}\int_{{p\in \V_z}}\|p\|^2e^{-\|p\|^2}\,dp\,dz.
\]
Now, for any $z\in\O$ (except at most a finite number) using Lemma \ref{lem:integral} we have
\begin{equation}\label{eq:aux}
  {
%\int_{p\in \I:p(z)=0}\|p\|^2e^{-\|p\|^2}\,dp= 
  \int_{p\in\V_z}\|p\|^2e^{-\|p\|^2}\,dp=
  \pi^{m-1}\,r\,(m-1),}
\end{equation}
since the set {$\V_z$} is (for almost all $z\in\O$) an algebraic subvariety of dimension $m-1$ and degree $r$. 
Using that the volume of $\O$ is $d_\O Vol(\P(\C^2))=\pi d_\O$ (see for example \cite[Cor. 20.10]{Condition}) we then get the claimed formula.
\end{itemize}

\section{Examples}
We now analyze some of the consequences of Theorem \ref{th:general}, visiting the scenarios {\bf A}, {\bf B}, {\bf C} and {\bf D} in the introduction.
\subsection{Solving univariate polynomials}\label{Dense}
Let $\mathcal{H}_N[X,Y]$ be the space of degree $N$ homogeneous polynomials with unknowns $X$ and $Y$. Let
\[
 \I=\mathcal{H}_N[X,Y],\quad \O=\P(\C^2),\quad \mathcal{F}(p,z)=p(z),
\]
so a pair $(p,z)$ is in $\V$ if and only if $z$ is a (projective) zero of $p$. Note that
the hypotheses of Theorem \ref{th:general} 
%(and Corollary \ref{cor:otrosproductos})
 are trivially satisfied. We then have:
\begin{equation}\label{eq:polinomios}
 {\rm E}_{p\in\mathcal{H}_N[X,Y]}\left(\frac{1}{N}\sum_{z\in\P(\C^2):\,p(z)=0}\mu_{*}(p,z)^2\right)=\frac{N}{N}=1,
\end{equation}
independently of the degree.
% and independently of the Hermitian product $\|\cdot\|_*$ chosen in $\mathcal{H}_N[X,Y]$. 
Since we are endowing the space $\mathcal{H}_N[X,Y]$ with the metric given by the
$2$--norm of the vector of coefficients of $p$ (which makes monomials of
different degrees {an orthonormal basis}), the condition number has the form
\[
 \mu(p,(z,1))=\frac{\|p\|\|(1,z,\ldots,z^N)\|}{|p'(z)|({1+|z|^2})}.
\]
A random polynomial is obtained by choosing coefficients $a_j$,
{associated to the monomial} $X^jY^{N-j}$ (with $0\leq j\leq N$),
{independent with distribution $\mathcal{N}_\C(0,1)$}. Equation \eqref{eq:polinomios} then implies that the condition number for such a random polynomial is quite small.
%
%Remarkably, the same result on the expected value the squared condition number holds if we endow $\mathcal{H}_N[X,Y]$ with the the Bombieri--Weyl Hermitian product which makes monomials of different degrees orthogonal and $\|X^jY^{N-j}\|_{BW}=\binom{N}{j}^{-1/2}$. In that case the condition number has a very simple expression in terms of the non--homogeneous polynomial $p(z)=p(z,1)$ (with $z\in\C$):
%\[
% \mu_{BW}(p,(z,1))=\frac{\|p\|_{BW}(1+|z|^2)^{\frac{N}{2}-1}}{|p'(z)|},
%\]
%where $\|p\|_{BW}$ is the Bombieri--Weyl norm of the polynomial $p$, see
%\cite{Facility}. A random polynomial is obtained by choosing
%coefficients $a_j$ (with $0\leq j\leq N$) \rojo{independent with
%distribution $\mathcal{N}_\C\big(0,\binom{N}{j}\big)$}. This Bombieri--Weyl metric is the standard choice in many papers regarding polynomial system solving after Shub and Smale's work \cite{ShSm93b}, see also \cite{BlCuShSm98,Condition}.
%Our claim, that the expected value of $\mu_{BW}^2$ equals $1$, agrees with the value computed in \cite[Th. 23]{BePa:11}, where $\sqrt{N}\, \mu_{WB}$ is denoted by $\mu_{\rm norm}$.

% In other words, we have:
% \begin{center}
%  As far as the coefficients of a random dense polynomial are Gaussian zero--mean variables, the squared condition number associated to the variances of the coefficients has average equal to $1$.
% \end{center}
\subsection{Lacunary polynomial solving}\label{Lacunary}
Now, let $i=\{0,i_1,i_2,\ldots,i_k,N\}\subseteq\{0,\ldots,N\}$ (we identify the case $k=0$ with $i=\{0,N\}$) be a set of indices and let 
\[
 \mathcal{H}_N^{i}[X,Y]=\{a_0Y^N+a_{i_1}X^{i_1}Y^{N-i_1}+\cdots+a_{i_k}X^{i_k}Y^{N-i_k}+a_NX^N:a_0,a_{i_k},a_N\in\C\}
\]
be the space of polynomials containing only monomials of those degrees. Let 
\[
 \I=\mathcal{H}_N^{i}[X,Y],\quad \O=\P(\C^2),\quad \mathcal{F}(p,z)=p(z).
\]
The hypotheses of Theorem \ref{th:general} are again satisfied (note that the polynomial $X^N-Y^N$ has exactly $N$ projective zeros).

{As a subspace of $\mathcal{H}_N[X,Y]$, the input space
$\mathcal{H}_N^{i}[X,Y]$ can be endowed with the induced Hermitian
products given in the example before.}
Then, from Theorem \ref{th:general} we have
\[
 {\rm E}_{p\in \mathcal{H}_N^{i}[X,Y]}\left(\frac{1}{N}\sum_{z\in
 \P(\C^2):\, p(z)=0}\mu(p,z)^2\right)=\frac{k+1}{N}.
\]
This equality describes quantitatively how solving lacunary systems exhibit better stability properties than solving dense polynomials, which was qualitatively easily shown since the perturbations on the input are more restricted.

\subsection{Generalized eigenvalue problem}\label{GEVP}
As pointed out above, with
\[
 \I=\C^{n\times n}\times \C^{n\times n},\quad \O=\P(\C^2),\quad \mathcal{F}((A,B),(\alpha,\beta))=\det(\beta A-\alpha B),
\]
we get the GEVP. Let us now apply our Theorem \ref{th:general}. Note that the degrees of $\mathcal F$ in its two entries are $r=s=n$. We check that:
 \begin{itemize}
  \item The number of eigenvalues is finite for all nonsingular choices
    of $A,B$, that is out of a {$2n^2-2$} dimensional subvariety, and there exists $(A,B)$ such that the number of eigenvalues is equal to $n$.
\item For all $(\alpha,\beta)\in\P(\C^2)$, the set $\V_{(\alpha,\beta)}$
  of $(A,B)$ such that $\det(\beta A-\alpha B)=0$ is a {$2n^2-1$} variety. Moreover, $V_{(1,0)}$ is the set of $(A,B)$ such that $\det(B)=0$ which has degree $n$
 \end{itemize}
Thus, the hypotheses of Theorem \ref{th:general} are satisfied. We then have:
\[
 {\rm E}_{(A,B)\in \C^{n\times n}\times \C^{n\times
 n}}\left(\frac{1}{n}\sum_{(\alpha,\beta)\in {\mathrm{Eig}}(A,B)}\mu((A,B),(\alpha,\beta))^2\right)=\frac{(2n^2-1)n}{n}=2n^2-1.
\]
The expected value of the squared condition number for the GEVP is thus essentially equal to the size of the input.

\subsection{Polynomial eigenvalue problem}\label{PEVP}
This case is just:
\[
 \I=(\C^{n\times n})^{d+1},\quad \O=\P(\C^2),\quad \mathcal{F}(A,(\alpha,\beta))=\det\left(\sum_{j=0}^d\alpha^j\beta^{d-j}A_j\right),
\]
where $A=(A_0,\ldots,A_d)\in\I$. Again the hypotheses of Theorem \ref{th:general} are easily checked, and we conclude:
\[
  {\rm E}_{A\in (\C^{n\times n})^{d+1}}\left(\frac{1}{dn}\sum_{(\alpha,\beta)\in\mathrm{Eig}(A)}\mu(A,(\alpha,\beta))^2\right)=\frac{((d+1)n^2-1)n}{dn}=\frac{(d+1)n^2-1}{d}, \]
which proves our Theorem \ref{th:main}.

\subsection{Sparse polynomial eigenvalue problem}\label{sparsePEVP}
The same preceding argument can be applied to sparse versions of the PEVP case. Instead of writing down a generic result, we show how to use it in a particular case: assume for example that we deal with the Quadratic Eigenvalue Problem 
\[
\mathcal{F}((A,B,C),(\alpha,\beta))=\det(\alpha^2A+\alpha\beta B+\beta^2C)=0,
\]
where we impose on the input matrices some structure. For example, assume that $A$ is diagonal and $C$ is upper triangular, and assume that the nonzero entries follow again an independent distribution $\mathcal{N}_\C(0,1)$. We check the hypotheses of Theorem \ref{th:general}:
\begin{itemize}
	\item The number of solutions is finite as far as $A$ and $C$ are nonsingular, thus out of a codimension $2$ variety, and for input $A=\mathrm{Id}_n$ (the identity matrix), $B=0$ and generic $C$ the number of solutions is equal to $2n$. 
	\item $\V_{(\alpha,\beta)}$ is a codimension $1$ variety for all $(\alpha,\beta)\in\O$ and $\V_{(1,0)}$ is the set of $(A,B,C)$ such that $\det(A)=0$, that is a degree $n$ variety.
\end{itemize}
Theorem \ref{th:general} can thus be applied and we conclude that in this sparse case there are in general $2n$ eigenvalues and the expected condition number squared equals:
\[
{\rm E}_{A,B,C}\left(\frac{1}{2n}\sum_{(\alpha,\beta)\in\mathrm{Eig}(A,B,C)}\mu((A,B,C),(\alpha,\beta))^2\right)=\frac{(n+n^2+\frac{n(n+1)}{2})n}{2n}=\frac{3n^2+3n}{4}. \]
\subsection{More elaborated eigenvalue problems}\label{ellaboratedPEVP}
We finally include one example of use in the case that $\O\neq\P(\C^2)$. Assume that our problem is the following: on input $A,B,C$ (three $n\times n$ matrices), find $(\alpha,\beta,\gamma)\in\P(\C^3)$ such that:
\[
\alpha\beta+\alpha\gamma+\beta\gamma=0,\quad \det(\alpha A+\beta B+\gamma C)=0.
\]
This is, in some sense, a system of two homogeneous equations and three variables, thus one would expect the solution set to be a finite collection of points $(\alpha,\beta,\gamma)\in\P(\C^3)$. We can treat this problem inside our framework as follows. Let
\[
\O=\{ (\alpha,\beta,\gamma)\in\P(\C^3):\alpha\beta+\alpha\gamma+\beta\gamma=0\},
\]
which is a degree $2$ irreducible projective algebraic subvariety of $\P(\C^3)$. Assume now that $A,B,C$ have random independent entries with distribution $\mathcal{N}_\C(0,1)$. We check that the hypotheses of Theorem \ref{th:general} hold:
\begin{itemize}
	\item If $A$ is nonsingular, then $(1,0,0)\in\O$ is not a solution of 
	\[
	\mathcal F((A,B,C),(\alpha,\beta,\gamma))=\det(\alpha A+\beta B+\gamma C)=0.
	\] 
	Similarly, if $B$ is nonsingular then $(0,0,1)$ is not a solution. Since for any input the solution set is an algebraic subvariety of $\O$ (i.e. the total set or a finite collection of points), we conclude that out of a codimension $2$ subvariety the number of solutions is finite. Moreover, for $(A,B,C)=(\mathrm{Id}_n,B,-B)$ where $B$ is diagonal with entries $1,2,\ldots,n$ the solutions are those points of the form
	\[
	(-m,\gamma+1,\gamma)\in\O,\quad 1\leq m\leq n,\quad \beta\in\C.
	\]
	For these points to be in $\O$ we need
	\[
	-m(\gamma+1+\gamma)+(\gamma+1)\gamma=0,
	\]
	that is for each $m$ there are exactly $2$ values of $\gamma$ that make $	(-m,\gamma+1,\gamma)\in\O$. We thus have found an input with exactly $2n$ solutions.
	\item $\V_{(\alpha,\beta,\gamma)}$ is a codimension $1$ variety for all $(\alpha,\beta,\gamma)\in\O$ and $\V_{(1,0,0)}$ is the set of $(A,B,C)$ such that $\det(A)=0$, that is a degree $n$ variety.
\end{itemize}
We can thus apply Theorem \ref{th:general} and we conclude that the expected condition number squared for the problem in this section equals:
\[
{\rm E}_{A,B,C}\left(\frac{1}{2n}\sum_{(\alpha,\beta,\gamma)\in\O:\mathcal F((A,B,C),(\alpha,\beta,\gamma))=0}\mu((A,B,C),(\alpha,\beta,\gamma))^2\right)=\frac{(3n^2-1)n}{n}=3n^2.
\]

\appendix
\section{A useful integral}
In this section we prove the following lemma.
\begin{lemma}\label{lem:integral}
 Let $\J\subseteq\C^a$ be a homogeneous complex algebraic variety of degree $d$ and dimension $n$. Then,
 \[
  \int_{p\in\J}\|p\|_2^2e^{-\|p\|_2^2}\,dp=\pi^{n}\,n d.
 \]
\end{lemma}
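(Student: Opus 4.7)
The plan is to exploit the fact that $\J$ is a cone by performing a polar decomposition with respect to the Hermitian norm, separating the integral into a radial factor and a factor depending only on the ``spherical slice'' $\J\cap\SS(\C^a)$, where $\SS(\C^a)$ is the unit sphere of $\C^a$. Writing $p=ru$ with $r=\|p\|\in[0,\infty)$ and $u\in\J\cap\SS(\C^a)$, the variety $\J$ has real dimension $2n$, so the Jacobian of this change of variables is $r^{2n-1}$. Hence
\[
 \int_{p\in\J}\|p\|^2 e^{-\|p\|^2}\,dp
 \;=\;\vol\bigl(\J\cap\SS(\C^a)\bigr)\int_0^\infty r^{2n+1}e^{-r^2}\,dr.
\]
The radial integral is a standard Gamma computation: the substitution $t=r^2$ gives $\int_0^\infty r^{2n+1}e^{-r^2}\,dr=\tfrac{1}{2}\Gamma(n+1)=\tfrac{n!}{2}$.

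The remaining ingredient is the volume of the spherical slice $\J\cap\SS(\C^a)$. Here I would invoke the Hopf fibration $\J\cap\SS(\C^a)\to\P(\J)$ with $S^1$-fibres of length $2\pi$: because $\J$ is a $\C$-homogeneous variety, this map is a Riemannian submersion with totally geodesic circle fibres, so
\[
 \vol\bigl(\J\cap\SS(\C^a)\bigr)=2\pi\cdot\vol\bigl(\P(\J)\bigr),
\]
where $\P(\J)\subseteq\P(\C^a)$ is carrying the Fubini--Study metric. Then by Wirtinger's theorem (the classical volume formula for projective subvarieties, essentially the statement that the degree equals the intersection number with a generic linear subspace, and that volume is computed as the integral of a power of the K\"ahler form), the volume of a degree--$d$ projective variety of complex dimension $n-1$ in $\P(\C^a)$ equals $d$ times the volume of a linear $\P^{n-1}(\C)$ under Fubini--Study, namely $d\cdot\pi^{n-1}/(n-1)!$.

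Putting everything together gives
\[
 \int_{p\in\J}\|p\|^2e^{-\|p\|^2}\,dp
 \;=\;\frac{n!}{2}\cdot 2\pi\cdot \frac{d\,\pi^{n-1}}{(n-1)!}
 \;=\;n\,d\,\pi^{n}.
\]
The only step where one has to be a little careful is the Wirtinger/degree step, since it is the one appealing to a nontrivial structural theorem of complex algebraic geometry; the rest is bookkeeping. A secondary technicality is that $\J$ may be singular, but the singular locus has real codimension at least $2$ inside $\J$, so it contributes zero to the integral and to the volume, and the polar decomposition and the submersion argument apply verbatim on the smooth part.
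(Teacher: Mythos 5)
Your argument is correct and is essentially the same computation as the paper's, just bookkept in two steps: you first pass to polar coordinates (radius plus unit--sphere slice) and then apply the Hopf fibration to relate $\vol(\J\cap\SS(\C^a))$ to $\vol(\P(\J))$, whereas the paper applies the coarea formula directly to $\pi:\J\to\P(\J)$ with $\C$-fibers, which internally performs exactly this radial--angular split when it evaluates $\int_{w\in\C}|w|^{2n}e^{-|w|^2}\,dw$. Both proofs rest on the same two ingredients --- the Wirtinger volume formula for $\P(\J)$ and a one-dimensional Gamma integral --- so there is no substantive difference.
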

\begin{proof}
 We consider the projection $\pi:\J\to\P(\J)$ whose Normal Jacobian is equal to 
 \[
  NJ(\pi)(p)=\frac{1}{\|p\|_2^{2n-2}}.
 \]
(In order to compute this Normal Jacobian just consider any o.n. basis of $T_p\J$ whose first vector is $\dot p=p$).

The Coarea Formula then yields
\[
 \int_{p\in\J}\|p\|_2^2e^{-\|p\|_2^2}\,dp=\int_{q\in\P(J)}\int_{p\in\J:\pi(p)=q}\|p\|_2^{2n-2+2}e^{-\|p\|_2^2}\,dp\,dq=
 \]
 \[
 Vol_{\P(\C^a)}(\P(\J))\int_{w\in\C}|w|^{2n}e^{-|w|^2}\,dw
\]
Since $\P(\J)$ is a degree $d$ and dimension $n-1$, from \cite[Cor. 20.10]{Condition} its volume in the projective space equals
\[
 dVol(\P(\C^n))=\frac{\pi^{n-1}}{\Gamma(n)}\,d.
\]
On the other hand, using polar coordinates we have
\[
 \int_{w\in\C}|w|^{2n}e^{-|w|^2}\,dw=2\pi\int_0^\infty t^{2n+1}e^{-t^2}\,dt=\pi\int_0^\infty s^{n}e^{-s}\,ds=\pi\Gamma(n+1).
\]
The lemma follows.

\end{proof}

\end{document}